\newcommand{\bA}{\mathbf{A}}
\newcommand{\bD}{\mathbf{D}}
\newcommand{\bI}{\mathbf{I}}
\newcommand{\bM}{\mathbf{M}}
\newcommand{\bv}{\mathbf{v}}
\newcommand{\bx}{\mathbf{x}}
\newcommand{\by}{\mathbf{y}}
\newcommand{\vol}{\text{vol }}
\DeclareMathOperator{\Tr}{Tr}
\newtheorem{lemma}{Lemma}
\newtheorem{fact}{Fact}
\newtheorem{cor}{Corollary}
\newtheorem{prop}{Proposition}
\newtheorem{theorem}{Theorem}
\date{}
\begin{document}

\title{A Linear Cheeger Inequality using Eigenvector Norms}
\author{Franklin H.J. Kenter\footnote{Corresponding author. fhk2@rice.edu. Rice University; Houston, TX 77005.}}
\maketitle

\begin{abstract}
The Cheeger constant, $h_G$, is a measure of expansion within a graph. The classical Cheeger Inequality states: $\lambda_{1}/2 \le h_G \le \sqrt{2 \lambda_{1}}$ where $\lambda_1$ is the first nontrivial eigenvalue of the normalized Laplacian matrix. Hence, $h_G$ is tightly controlled by $\lambda_1$ to within a quadratic factor.

We give an alternative Cheeger Inequality where we consider the $\infty$-norm of the corresponding eigenvector in addition to $\lambda_1$. This inequality controls $h_G$ to within a linear factor of $\lambda_1$ thereby providing an improvement to the previous quadratic bounds. An additional advantage of our result is that while the original Cheeger constant makes it clear that $h_G \to 0$ as $\lambda_1 \to 0$, our result shows that $h_G \to 1/2$ as $\lambda_1 \to 1$.

\end{abstract}

Keywords: spectral graph theory; graph partitioning; Cheeger constant; graph Laplacian

\section{Introduction}
Let $G = (V, E)$ be an undirected graph on $n$ vertices with no isolated vertices. Let $\mathbf{A}$ be the adjacency matrix of $G$ and $\mathcal{L} := \bI - \bD^{-1/2} \bA \bD^{-1/2}$ be the normalized Laplacian matrix with eigenvalues $0 = \lambda_0 \le \lambda_1 \le \ldots \le \lambda_n$. 

It is a basic result in spectral graph theory that 0 is a simple eigenvalues of $\mathcal{L}$ if and only if the graph $G$ is connected. More generally, the number of connected components of $G$ is equal to the multiplicity of 0 as an eigenvalue of $\mathcal{L}$ \cite{SGT}. Further, loosely speaking, if $\lambda_1 \approx 0$, then $G$ is nearly disconnected.  This is made formal by the Cheeger ratio and the Cheeger Inequality. The Cheeger constant, $h_G$, of a graph, G, is defined 
\begin{equation} h_G := \min_{\substack{S \subset V \\ 0 < \vol S \le \vol G/2}} \frac{E(S, \bar S)}{\vol{S}} \label{ratio}
\end{equation}
where   $d_v$ is the degree of a vertex $v$, $\vol S := \sum_{v \in S} d_v$ the \emph{volume} of $S\subset V$, and 
 $E(S,\bar S)$ is the number of edges between $S$ and its complement, $\bar S$.
The fundamental connection between $h_G$ and $\mathcal{L}$ is the classical Cheeger Inequality:
\begin{theorem}[Cheeger's Inequality, see for example \cite{originalcheeger, SGT}]
Let $G$ be an undirected graph and $\lambda_{1}$ be the second smallest eigenvalue of the normalized Laplacian matrix of $G$. Then,

\[ \frac{\lambda_{1}}{2} \le h_G \le \sqrt{2 \lambda_{1}}.\]
\end{theorem}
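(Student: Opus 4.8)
The plan is to prove the two inequalities separately, in both cases exploiting the variational (Rayleigh quotient) characterization of $\lambda_1$. Writing an eigenvector of $\mathcal{L}$ as $\bD^{1/2} f$, one has
\[ \lambda_1 = \min_{\substack{f \neq 0 \\ \sum_v d_v f(v) = 0}} \frac{\sum_{u \sim v}(f(u)-f(v))^2}{\sum_v d_v f(v)^2}, \]
and the whole argument will consist of feeding cleverly chosen functions $f$ into this quotient.

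For the lower bound $\lambda_1/2 \le h_G$, I would fix any $S$ feasible for the minimum defining $h_G$ and test the quotient on the function equal to $1/\vol S$ on $S$ and $-1/\vol{\bar S}$ on $\bar S$. This $f$ satisfies $\sum_v d_v f(v)=0$ by construction, so its quotient is at least $\lambda_1$; a direct simplification of numerator and denominator yields $\lambda_1 \le E(S,\bar S)\bigl(1/\vol S + 1/\vol{\bar S}\bigr)$, and since $\vol S \le \vol G/2 \le \vol{\bar S}$ this is at most $2\,E(S,\bar S)/\vol S$. Taking the minimum over $S$ gives the claim. This direction is routine.

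The upper bound $h_G \le \sqrt{2\lambda_1}$ is the substantive half, and I would attack it by a threshold-rounding (``sweep'') argument on the eigenfunction $f$ attaining $\lambda_1$. First I would reduce to a nonnegative function: the positive and negative parts $f_+,f_-$ have disjoint supports, so one of them, call it $g$, has $\vol(\operatorname{supp} g) \le \vol G/2$; a case check on each edge shows $(g(u)-g(v))(f(u)-f(v)) \ge (g(u)-g(v))^2$, which combined with the eigen-equation $\sum_{u\sim v}(f(v)-f(u)) = \lambda_1 d_v f(v)$ shows the Rayleigh quotient of $g$ is still at most $\lambda_1$. Then for $t \ge 0$ I would form the level sets $S_t = \{v : g(v)^2 > t\}$, which all obey the volume constraint, and invoke the coarea identities $\int_0^\infty E(S_t,\bar S_t)\,dt = \sum_{u\sim v}\lvert g(u)^2 - g(v)^2\rvert$ and $\int_0^\infty \vol S_t\,dt = \sum_v d_v g(v)^2$.

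The crux --- and the step I expect to be the main obstacle --- is the Cauchy--Schwarz estimate that produces the square root and hence the quadratic relationship. Factoring $g(u)^2 - g(v)^2 = (g(u)-g(v))(g(u)+g(v))$ and applying Cauchy--Schwarz gives $\sum_{u\sim v}\lvert g(u)^2-g(v)^2\rvert \le \sqrt{\sum_{u\sim v}(g(u)-g(v))^2}\,\sqrt{\sum_{u\sim v}(g(u)+g(v))^2}$, and bounding the second factor by $2\sum_v d_v g(v)^2$ yields $\sum_{u\sim v}\lvert g(u)^2 - g(v)^2\rvert \le \sqrt{2\lambda_1}\,\sum_v d_v g(v)^2$. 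Combining this with the first coarea identity and the inequality $E(S_t,\bar S_t) \ge h_G\,\vol S_t$ valid for every level set forces some threshold to achieve ratio at most $\sqrt{2\lambda_1}$, completing the bound. The delicate points to get right are the sign/volume reduction to the nonnegative $g$ and the recognition that the square-root factor introduced by Cauchy--Schwarz is exactly what blocks a linear bound in this classical argument --- precisely the slack the present paper aims to remove.
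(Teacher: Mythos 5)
Your proposal is correct as written: the test-function computation for the lower bound and the positive-part reduction, coarea identity, and Cauchy--Schwarz estimate for the upper bound are exactly the standard proof of the classical Cheeger inequality. The paper itself gives no proof of this statement --- it is quoted as known, with citations to \cite{originalcheeger, SGT} --- and your outline matches the argument in those references; your closing remark that the Cauchy--Schwarz step is the sole source of the square root is precisely the slack the paper's main theorem is designed to remove.
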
 Hence, $h_G \to 0$ as $\lambda_1 \to 0$ and vice versa.

The main technique to prove Cheeger's Inequality, notably the upper bound, is to generate a spectral-based algorithm to provide a set $S$ with the corresponding small Cheeger ratio. In particular, a ``sweep'' algorithm is used whereby the components of the eigenvector corresponding to $\lambda_1$ are ordered. From this ordering, the set $S$ is chosen by taking all components less than some real number $\alpha$ followed by ``sweeping'' through all possible (i.e., optimizing) $\alpha$. See, for example, \cite{SGT}.

The original concept of the Cheeger constant originates from the idea in differential geometry relating the isoperimetric problem to the spectrum of the Laplacian on manifolds \cite{originaljeffcheeger}. For graphs, the Cheeger constant is a measure of edge expansion within a graph with spectral results tracing back to Alon and Milman \cite{originalcheeger}. Much work on the Cheeger constant has been done by Chung \cite{chungcheeger, SGT}. In addition, there have been many recent improvements and adaptations considering other eigenvalues \cite{highercheeger1, highercheeger2}. 


Our main result is the following:

\begin{theorem}[Linear Cheeger Inequality with eigenvector norms]\label{newcheeger}
Let $G$ be an undirected graph on $n$ vertices with maximum degree $\Delta$ and $\frac{\Delta}{\vol G} = o(n^{-2/3})$. Let $\lambda_{1}$ be the second minimum eigenvalue of the normalized Laplacian with eigenvector $\bD^{1/2} \bv$ (i.e., $\bv$ is a harmonic eigenvector)  and $\| \bD^{1/2}  \bv \|_2=1$. Then, the Cheeger constant $h_G$ obeys:
\[ \frac{1}{2} - \frac{1-\lambda_{1}}{2} \le h_G \le \left( \frac{1}{2} - \frac{1-\lambda_{1}}{2  \|\bv\|_\infty^2 \emph{vol}\ G } \right) (1+o(1)).\]
\end{theorem}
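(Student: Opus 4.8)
The plan begins by noting that the lower bound is simply the classical Cheeger lower bound in disguise: since $\frac12-\frac{1-\lambda_1}{2}=\frac{\lambda_1}{2}$, the inequality $\frac12-\frac{1-\lambda_1}{2}\le h_G$ is exactly $\frac{\lambda_1}{2}\le h_G$, which is immediate from Cheeger's Inequality as stated above. All the work is therefore in the upper bound. For that I would first extract from the eigen-equation $\mathcal L\,\bD^{1/2}\bv=\lambda_1\bD^{1/2}\bv$, rewritten in terms of $\bv$ (which is what exhibits $\bv$ as harmonic), three identities: orthogonality to the trivial eigenvector $\bD^{1/2}\mathbf 1$ gives $\sum_u d_u v_u=0$; the normalization gives $\sum_u d_u v_u^2=\|\bD^{1/2}\bv\|_2^2=1$; and expanding $\lambda_1=\sum_{(u,w)\in E}(v_u-v_w)^2$ against $\sum_{(u,w)\in E}(v_u^2+v_w^2)=1$ yields the crucial correlation identity $\sum_{(u,w)\in E}v_uv_w=\frac{1-\lambda_1}{2}$, equivalently $\bv^{T}\bA\bv=1-\lambda_1$.

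The construction is a randomized rounding of $\bv$. I would set $\tilde v=\bv/\|\bv\|_\infty$, so every entry lies in $[-1,1]$, and place each vertex $u$ into $S$ independently with probability $p_u=\tfrac12(1+\tilde v_u)\in[0,1]$. Using $\sum_u d_u v_u=0$ gives $\mathbb E[\vol S]=\tfrac12\vol G$, and the elementary identity $p_u+p_w-2p_up_w=\tfrac12-\tfrac12\tilde v_u\tilde v_w$ combined with the correlation identity gives $\mathbb E[E(S,\bar S)]=\frac{\vol G}{4}-\frac{1-\lambda_1}{4\|\bv\|_\infty^2}$. Hence the ratio of expectations equals exactly the target $R:=\frac12-\frac{1-\lambda_1}{2\|\bv\|_\infty^2\vol G}$, which is the bracketed quantity in the theorem.

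It then remains to turn this ratio of expectations into an honest set attaining the Cheeger minimum. Both $\vol S=\sum_u d_uX_u$ (linear in the independent indicators $X_u$) and $E(S,\bar S)$ (which moves by at most $d_u\le\Delta$ when a single $X_u$ is flipped) concentrate: Hoeffding and McDiarmid give additive fluctuations of order $\sqrt{\Delta\,\vol G}$ up to logarithmic factors. The hypothesis $\Delta/\vol G=o(n^{-2/3})$ forces these to be a vanishing fraction of $\vol G$, so with high probability a sampled $S$ satisfies $\frac{E(S,\bar S)}{\vol S}=R\,(1+o(1))$; replacing $S$ by whichever of $S,\bar S$ has the smaller volume enforces the constraint $\vol\le\tfrac12\vol G$ while changing the ratio by only $(1+o(1))$, since $\vol S$ lies within $o(\vol G)$ of $\tfrac12\vol G$. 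The probabilistic method then produces a single set with $\frac{E(S,\bar S)}{\vol S}\le R(1+o(1))$, which bounds $h_G$ from above.

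The main obstacle I anticipate is the concentration step for $E(S,\bar S)$: it is a genuinely quadratic (bilinear) function of the independent indicators, so its fluctuations must be handled by a bounded-differences or variance estimate rather than a linear tail bound, and — more delicately — they must be shown negligible not merely next to $\vol G$ but next to the improvement term $\frac{1-\lambda_1}{2\|\bv\|_\infty^2\vol G}$ in the regime where $\|\bv\|_\infty^2\vol G$ is large and that term is small. It is exactly this balance between the additive error $\sqrt{\Delta\,\vol G}$ and the size of the improvement that dictates the precise degree hypothesis $\Delta/\vol G=o(n^{-2/3})$, and carrying the bookkeeping of these error terms through the passage to the smaller side is where the genuine care is required.
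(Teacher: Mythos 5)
Your construction coincides with the paper's: independently place vertex $u$ in $S$ with probability $\tfrac12+\tfrac{\bv_u}{2\|\bv\|_\infty}$ (the paper shifts this down by a small $\delta=n^{-1/3}$), and your edge-by-edge identity $\mathbb{E}[E(S,\bar S)]=\tfrac{\vol G}{4}-\tfrac{1-\lambda_1}{4\|\bv\|_\infty^2}$ is exactly the paper's computation of $\mathbb{E}[\bx^*\bD^{1/2}(\bI-\mathcal L)\bD^{1/2}\bx]$ via Lemma \ref{quadform1}, written in coordinates. The genuine divergence is in how the expectation statement is turned into an actual set. You propose two-sided concentration of the quadratic quantity $E(S,\bar S)$ via McDiarmid; this is workable (the bounded-differences constant is $\sum_u d_u^2\le\Delta\vol G$, and the hypothesis $\Delta/\vol G=o(n^{-2/3})$ makes $\sqrt{\Delta\,\vol G\,\log n}=o(\vol G)$), but it is exactly the step you flag as delicate. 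The paper sidesteps it with a one-sided, first-moment argument: it applies Chernoff only to the linear quantity $\vol S$, uses the pointwise inequality $E(S,S)=\vol S-E(S,\bar S)\le(1-h_G)\vol S$ on the good event (valid because the $\delta$-shift keeps $\vol S\le\vol G/2$ there --- the role your pass to the smaller of $S,\bar S$ plays), and the trivial bound $E(S,S)\le\vol G$ on the bad event; equating the resulting upper bound on $\mathbb{E}[E(S,S)]$ with its exact value yields the theorem with no concentration of the cut size at all. So your route is correct but pays for an extra concentration estimate that the paper's first-moment trick makes unnecessary; both arguments share the same final cosmetic step of converting an additive $o(1)$ error into the stated multiplicative $(1+o(1))$.
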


We present the theorem in this manner (most notably the left-hand side) in order to  emphasize that the Cheeger constant in actuality measures how much the ratio $\frac{E(S, \bar S)}{\vol{S}}$ can be improved from $1/2$. Specifically, under the mild conditions indicated, choosing the set $S$ randomly demonstrates $h_G < 1/2 \cdot (1+o(1))$. In fact, as part of the proof to Theorem \ref{newcheeger} we provide an alternative randomized ``sweep'' which produces these alternative guarantees. This randomized ``sweep'' uses the eigenvector as a seed from which to determine various probabilities to construct the set $S$.

This result provides several additional contributions. First, while the original Cheeger Inequality shows that $h_G \to 0$ as $\lambda_1 \to 0$, this result shows that under mild conditions, $h_G \to 1/2$ as $\lambda_1 \to 1$. Hence, the original Cheeger Inequality and the ``sweep'' algorithm are not necessarily effective when $\lambda_1 > 1/8$. 
Finally, the result demonstrates that the quadratic factor in the original Cheeger Inequality can be elegantly replaced with a linear factor at the expense of the additional norm term.

We should remark that the denominator of $\|\bv\|_\infty^2 \vol G$ may seem quite poor, as $\vol G$ can be as large as $O(n^2)$. However,  $\|\bv\|_\infty$ can be as low as $1/ \sqrt{\vol G}$. In which case, the result is asymptotically tight on both sides.

This paper is organized as follows: In Section \ref{pre} we give basic notation and preliminaries.  We give the proof to Theorem \ref{newcheeger} in Section \ref{cheeger} and provide a generalization for higher eigenvalues 
in Section \ref{sec.highereig}.
Finally, we discuss our conclusions and lines for future work in Section \ref{conc}.

\section{Preliminaries}\label{pre}

We consider simple unweighted graphs, $G=(V,E)$, where $V$ and $E$ are the set of vertices and edges respectively. Throughout, $n$ will denote the number of vertices, $|V|$. Given sets $S, T \subset V$, we let $E(S,T)$ denote the number of edges between $S$ and $T$, with any edge within $S \cap T$ counted twice. The {\it degree} of a vertex $v \in V$, denoted $d_v$ 
is simply $E(\{v\}, V)$. 
A graph is called $d$-\emph{regular} whenever all vertices have degree $d$. 
%
For a subset $S \subset V$ within an undirected graph $G$, the \emph{volume} of $S$, denoted $\vol S$, is the sum of the degrees: $\vol S :=  \sum_{s \in S} d_s$.


%


%

For a matrix $\bM$, we let $\bM^*$ denote the conjugate transpose of $\bM$. We use $\mathbf{I}$ and $\mathbf{J}$ to denote the identity and all-ones matrices, respectively. In addition, we will use $\mathbf{1}$ to denote the all-one vector, $\mathbf{1}_S$ to denote the indicator function of a set $S$, and $\mathds 1_A$ to denote the indicator function of an event $A$.

For $\bv \in \mathbb{R}^n$, we utilize  the vector norms $\|\bv\|_2$, and  $\|\bv\|_\infty$. Recall $\|\bv\|_2 = \sqrt{ \sum_i \bv_i^2}$, and $\|\bv\|_\infty = \max_i |\bv_i|$.



%
%
%


Let $\bD$ be the \emph{diagonal degree matrix} where $\bD_{ii} = d_i$. Our focus will be upon the \emph{normalized Laplacian Matrix}, denoted $\mathcal{L}$ where $\mathcal{L} := \bI - \bD^{-1/2}\bA \bD^{-1/2}$. (By convention, $\bD^{-1/2}_{ii} = 0$ whenever $i$ is an isolated vertex.) In the case that $\bD^{1/2} \by$ is an eigenvector of $\mathcal{L}$, we call $\by$ the \emph{harmonic eigenvector}. More information can be found in \cite{SGT}.

We can interpret the quadratic form of $\mathcal{L}$ as follows:
\begin{fact}
For two subsets of vertices, $S$ and  $T$, the following holds:
\begin{eqnarray*}
(\bD^{1/2} \mathbf{1}_S)^*(\bI - \mathcal{L}) (\bD^{1/2} \mathbf{1}_T) &=& E(S, T)
\end{eqnarray*}
\end{fact}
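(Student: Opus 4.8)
The plan is to unwind the definition of $\mathcal{L}$ and let the diagonal factors cancel. Since $\mathcal{L} = \bI - \bD^{-1/2}\bA\bD^{-1/2}$, we have $\bI - \mathcal{L} = \bD^{-1/2}\bA\bD^{-1/2}$, so the left-hand side becomes
\[ (\bD^{1/2}\mathbf{1}_S)^*\,\bD^{-1/2}\bA\bD^{-1/2}\,(\bD^{1/2}\mathbf{1}_T). \]
First I would observe that $\bD^{1/2}$ is a real diagonal matrix, hence symmetric, so $(\bD^{1/2}\mathbf{1}_S)^* = \mathbf{1}_S^*\bD^{1/2}$. The two adjacent products $\bD^{1/2}\bD^{-1/2}$ on each side collapse to the identity, because $G$ has no isolated vertices and thus $\bD^{-1/2}_{ii}\ne 0$ for every $i$. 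This leaves the reduced form $\mathbf{1}_S^*\,\bA\,\mathbf{1}_T$.

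The remaining step is to recognize this quadratic form as an edge count. Expanding, $\mathbf{1}_S^*\,\bA\,\mathbf{1}_T = \sum_{i,j} (\mathbf{1}_S)_i\,\bA_{ij}\,(\mathbf{1}_T)_j = \sum_{i\in S,\ j\in T}\bA_{ij}$, which sums the adjacency entries over ordered pairs with the first endpoint in $S$ and the second in $T$. By the definition of the adjacency matrix this is exactly $E(S,T)$, with edges inside $S\cap T$ counted twice, matching the convention established in the preliminaries.

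I do not expect a genuine obstacle here; the content is purely the bookkeeping of which diagonal factors cancel and a correct accounting of the double-counting convention on $S\cap T$. The single point worth stating explicitly is the cancellation $\bD^{1/2}\bD^{-1/2}=\bI$, which relies on the no-isolated-vertices hypothesis; otherwise the convention $\bD^{-1/2}_{ii}=0$ would annihilate a diagonal entry and break the identity. This \textit{Fact} is then exactly the bridge that will let me translate Cheeger-ratio quantities such as $E(S,\bar S)$ into spectral expressions involving $\bI - \mathcal{L}$ in the proof of Theorem \ref{newcheeger}.
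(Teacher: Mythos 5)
Your proof is correct, and it is the standard verification: the paper states this Fact without proof, treating the cancellation $\bD^{1/2}\bD^{-1/2}=\bI$ and the expansion $\mathbf{1}_S^*\bA\mathbf{1}_T=\sum_{i\in S,\,j\in T}\bA_{ij}$ as routine. Your accounting of the double-counting convention for edges inside $S\cap T$ and your remark on the no-isolated-vertices hypothesis match the conventions set in the Preliminaries exactly.
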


Throughout the paper, we make use of probability. For a random variable $X$, we let $\mathbb{E}X$ denote the expected value of $X$, and for an event $A$, we let $\mathbb{P}A$ denote the probability of $A$. One of the main techniques we apply is the random quadratic form of a random complex vector $\bx$ over a given matrix $\bA$. We let $\mathbf{\mu} := \mathbb{E} \bx$ denote the entry-wise expectation of $\bx$ with variance-covariance matrix $\mathbf{\Sigma} := \mathbb{E} \left[  (\mathbf{\mu} - \bx)(\mathbf{\mu} - \bx)^* \right]$. In which case we have the following:

\begin{prop}[Expectation of Random Quadratic Forms, see for example \cite{quadform}] \label{quadform}
Let $\bx \in \mathbb{C}^n$ be a random vector with $\mathbb{E}\bx = \mathbf{\mu}$ and variance-covariance matrix $\mathbf{\Sigma}$.
Then for an $n \times n$ real-symmetric matrix $\bA$,
\[ \mathbb{E} [\bx^* \bA \bx ] = \boldsymbol{\mu}^* \bA \boldsymbol{\mu} + \Tr(\mathbf{\Sigma} \bA) \]
where $\Tr(\cdot)$ indicates the trace of the matrix.
\end{prop}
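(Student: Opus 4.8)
The plan is to reduce the identity to the familiar bias--variance decomposition of a quadratic form. Write $\mathbf{z} := \bx - \boldsymbol{\mu}$ for the centered vector, so that $\mathbb{E}\mathbf{z} = 0$ and, since $(\boldsymbol{\mu}-\bx)(\boldsymbol{\mu}-\bx)^* = \mathbf{z}\mathbf{z}^*$, the variance--covariance matrix is exactly $\boldsymbol{\Sigma} = \mathbb{E}[\mathbf{z}\mathbf{z}^*]$. Substituting $\bx = \boldsymbol{\mu} + \mathbf{z}$ and expanding bilinearly gives
\[ \bx^* \bA \bx = \boldsymbol{\mu}^* \bA \boldsymbol{\mu} + \boldsymbol{\mu}^* \bA \mathbf{z} + \mathbf{z}^* \bA \boldsymbol{\mu} + \mathbf{z}^* \bA \mathbf{z}. \]
Taking expectations and using linearity, the two cross terms become $\boldsymbol{\mu}^* \bA \, \mathbb{E}\mathbf{z}$ and $(\mathbb{E}\mathbf{z})^* \bA \boldsymbol{\mu}$ (using that conjugate-transposition commutes with expectation), both of which vanish because $\mathbb{E}\mathbf{z}=0$. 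Hence $\mathbb{E}[\bx^* \bA \bx] = \boldsymbol{\mu}^* \bA \boldsymbol{\mu} + \mathbb{E}[\mathbf{z}^* \bA \mathbf{z}]$, and it remains only to identify the second term with $\Tr(\boldsymbol{\Sigma}\bA)$.

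For that I would apply the standard trace trick. The quantity $\mathbf{z}^* \bA \mathbf{z}$ is a scalar, so it equals its own trace, and by cyclic invariance of the trace $\mathbf{z}^* \bA \mathbf{z} = \Tr(\mathbf{z}^* \bA \mathbf{z}) = \Tr(\bA \mathbf{z}\mathbf{z}^*)$. Since the trace is a fixed linear combination of matrix entries, it commutes with expectation, giving $\mathbb{E}[\mathbf{z}^*\bA\mathbf{z}] = \Tr(\bA\, \mathbb{E}[\mathbf{z}\mathbf{z}^*]) = \Tr(\bA \boldsymbol{\Sigma})$. A final application of cyclic invariance, $\Tr(\bA\boldsymbol{\Sigma}) = \Tr(\boldsymbol{\Sigma}\bA)$, yields the claimed formula.

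The computation is short, so the only real care is in the bookkeeping for complex vectors; this is where I expect the sole obstacle, if any, to lie. The point to verify is that the interchange of expectation and trace (equivalently, of expectation with conjugate-transposition and with multiplication by the constant matrix $\bA$) is legitimate, which is immediate from linearity provided the entries of $\bx$ have finite second moments — implicit in the very existence of $\boldsymbol{\Sigma}$. It is worth noting that symmetry of $\bA$ is not actually needed: the argument goes through verbatim for any fixed $\bA \in \mathbb{C}^{n \times n}$, and the real-symmetric hypothesis merely matches the intended application to $\bI - \mathcal{L}$ and $\mathcal{L}$. As a cross-check, one can also derive the identity entrywise by writing $\mathbb{E}[\bx^*\bA\bx] = \sum_{i,j} A_{ij}\,\mathbb{E}[\bar{x}_i x_j]$ and using $\mathbb{E}[\bar{x}_i x_j] = \bar\mu_i \mu_j + \Sigma_{ji}$, which recombines into $\boldsymbol{\mu}^*\bA\boldsymbol{\mu} + \Tr(\boldsymbol{\Sigma}\bA)$; the trace-trick route is cleaner and avoids the index juggling.
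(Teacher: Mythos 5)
Your proof is correct and complete. One thing to note up front: the paper does not actually prove this proposition --- it is imported from the literature with the citation to Mathai and Provost, and only the specialized Lemma~\ref{quadform1} (the case of pairwise-independent entries and zero diagonal) gets an in-paper argument, which itself invokes the trace formula you just established. So your proposal supplies a proof the paper omits, and it does so by the standard route: the decomposition $\bx = \boldsymbol{\mu} + \mathbf{z}$, vanishing of the cross terms by linearity, and the cyclic trace identity $\mathbb{E}[\mathbf{z}^*\bA\mathbf{z}] = \Tr(\bA\,\mathbb{E}[\mathbf{z}\mathbf{z}^*])$. Your bookkeeping for the complex case is right (in particular, $(\boldsymbol{\mu}-\bx)(\boldsymbol{\mu}-\bx)^* = \mathbf{z}\mathbf{z}^*$, so the paper's sign convention in defining $\boldsymbol{\Sigma}$ is harmless, and the entrywise cross-check $\mathbb{E}[\bar{x}_i x_j] = \bar{\mu}_i\mu_j + \Sigma_{ji}$ is consistent with $\Sigma = \mathbb{E}[\mathbf{z}\mathbf{z}^*]$), and your observation that symmetry of $\bA$ is never used is also correct --- the hypothesis is there only because the application is to $\bI - \mathcal{L}$.
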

We will use a stronger form of the previous proposition:

\begin{lemma} \label{quadform1}
Let $\bx \in \mathbb{C}^n$ be a random vector whose entries are pairwise independent. Let $\mathbb{E}\bx = \mathbf{\mu}$. Then for an $n \times n$ real-symmetric matrix $\bA$, with $\bA_{ii} = 0$ for all $i$, 
\[ \mathbb{E} [\bx^* \bA \bx ] = \boldsymbol{\mu}^* \bA \boldsymbol{\mu}.\]
\end{lemma}

\begin{proof}
It suffices to show that $\Tr(\mathbf{\Sigma} \bA) = 0$.
\begin{eqnarray*}
\Tr(\mathbf{\Sigma} \bA) &=& \sum_{i,j=1}^n \mathbf{\Sigma}_{ij} \bA_{ji} \\
&=& \sum_{i\ne j} \mathbf{\Sigma}_{ij} \bA_{ji} +  \sum_{i=1}^n \mathbf{\Sigma}_{ii} \bA_{ii} \\ 
&=& \sum_{i\ne j} 0 * \bA_{ji} +  \sum_{i=1}^n \mathbf{\Sigma}_{ii} * 0\\
&=& 0 
\end{eqnarray*}
\end{proof}

A random variable $X$ has the \emph{Bernoulli distribution} with parameter $p$ if $X=1$ with probability $p$ and $X=0$ otherwise. In which case, we write, $X \sim \text{Bernoulli}(p)$.

In addition, we will make use of the Chernoff bound, a classical concentration inequality: 
\begin{prop}[Chernoff Bound, see \cite{AandS}]\label{prop.chernoff}
For $i = 1, \ldots k$, let $X_i$ be independent random variables with $0 \le X_i \le \Delta$. Define $S = \sum_i X_i$ with $\mathbb{E} S = \mu$. Then, for any $\varepsilon > 0$,  

\[ \mathbb{P} [ | \mu - S | > \varepsilon \mu ] \le 2 \exp\left( {\frac{-\varepsilon^2 \mu}{3 \Delta}} \right).\]

\end{prop}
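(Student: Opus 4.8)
The plan is to prove the bound by the classical exponential-moment (Chernoff--Bernstein) method, after first normalizing the summands to the unit interval. First I would set $Y_i := X_i/\Delta$, so that $0 \le Y_i \le 1$, the $Y_i$ remain independent, $S' := S/\Delta = \sum_i Y_i$, and $\mu' := \mathbb{E} S' = \mu/\Delta$. Since the event $\{|\mu - S| > \varepsilon\mu\}$ is identical to $\{|\mu' - S'| > \varepsilon\mu'\}$, it suffices to prove $\mathbb{P}[|\mu' - S'| > \varepsilon\mu'] \le 2\exp(-\varepsilon^2\mu'/3)$ for variables bounded in $[0,1]$; the claimed form then follows by substituting $\mu' = \mu/\Delta$. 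I would split the two-sided deviation by the union bound into an upper-tail event $\{S' \ge (1+\varepsilon)\mu'\}$ and a lower-tail event $\{S' \le (1-\varepsilon)\mu'\}$ and bound each separately.

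For the upper tail, fix $t > 0$ and apply Markov's inequality to $e^{tS'}$, giving $\mathbb{P}[S' \ge (1+\varepsilon)\mu'] \le e^{-t(1+\varepsilon)\mu'}\,\mathbb{E} e^{tS'}$. Independence yields $\mathbb{E} e^{tS'} = \prod_i \mathbb{E} e^{tY_i}$, and convexity of $x \mapsto e^{tx}$ on $[0,1]$ gives $e^{tY_i} \le 1 - Y_i + Y_i e^{t}$, so that $\mathbb{E} e^{tY_i} \le 1 + p_i(e^t - 1) \le \exp(p_i(e^t-1))$ with $p_i = \mathbb{E} Y_i$. Multiplying over $i$ produces $\mathbb{E} e^{tS'} \le \exp(\mu'(e^t-1))$, hence $\mathbb{P}[S' \ge (1+\varepsilon)\mu'] \le \exp(\mu'(e^t - 1 - t(1+\varepsilon)))$. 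Choosing the optimizing value $t = \ln(1+\varepsilon)$ collapses the right side to $\exp(\mu'(\varepsilon - (1+\varepsilon)\ln(1+\varepsilon)))$. The lower tail is treated in the mirror-image way by applying Markov to $e^{-tS'}$, which after the analogous optimization $t = -\ln(1-\varepsilon)$ yields $\exp(\mu'(-\varepsilon - (1-\varepsilon)\ln(1-\varepsilon)))$.

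The remaining, and most delicate, step is to absorb both exponents into the single clean form $-\varepsilon^2\mu'/3$. This reduces to the two scalar inequalities $\varepsilon - (1+\varepsilon)\ln(1+\varepsilon) \le -\varepsilon^2/3$ and $-\varepsilon - (1-\varepsilon)\ln(1-\varepsilon) \le -\varepsilon^2/2 \le -\varepsilon^2/3$, which I would verify by studying the sign of the difference of the two sides together with its derivatives at the origin. I expect this to be the main obstacle, since the upper-tail inequality is genuinely the binding one (it is what forces the constant $3$ rather than $2$), and it holds only on the relevant bounded range of $\varepsilon$ rather than for arbitrarily large $\varepsilon$; the lower-tail event is moreover vacuous once $\varepsilon \ge 1$ because $S' \ge 0$. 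Combining the two tail bounds through the union bound contributes the leading factor of $2$ and gives $\mathbb{P}[|\mu' - S'| > \varepsilon\mu'] \le 2\exp(-\varepsilon^2\mu'/3)$, which is exactly the desired inequality once the normalization $\mu' = \mu/\Delta$ is undone.
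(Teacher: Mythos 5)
The paper contains no proof of Proposition \ref{prop.chernoff} for you to be compared against: it is imported directly from Alon and Spencer \cite{AandS} as a known concentration inequality, and is invoked only once, with $\varepsilon = n^{-1/3}$, inside the proof of Theorem \ref{newcheeger}. Your argument is the standard exponential-moment proof of precisely this kind of bound, and its steps are sound: the normalization $Y_i = X_i/\Delta$ is exactly what produces the $\Delta$ in the denominator of the exponent; the convexity bound $e^{tY_i} \le 1 - Y_i + Y_ie^t$ and the estimate $\mathbb{E}\,e^{tY_i} \le \exp\bigl(p_i(e^t-1)\bigr)$ are correct and require only the mutual independence the proposition assumes; the optimizing choices $t=\ln(1+\varepsilon)$ and $t=-\ln(1-\varepsilon)$ yield the two exponents you state; and the two scalar inequalities you reduce to, namely $(1+\varepsilon)\ln(1+\varepsilon)-\varepsilon \ge \varepsilon^2/3$ and $-\varepsilon-(1-\varepsilon)\ln(1-\varepsilon) \le -\varepsilon^2/2$, both hold on $0<\varepsilon\le 1$ (for the first, $\ln(1+x)\ge 2x/(2+x)$ gives it in one line; your derivative-at-the-origin plan also works).

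The caveat you flag, however, is not a technical nuisance but the crux: the proposition as printed, ``for any $\varepsilon>0$,'' is false, so no proof of it can be completed. The inequality $(1+\varepsilon)\ln(1+\varepsilon)-\varepsilon\ge\varepsilon^2/3$ fails for $\varepsilon$ beyond roughly $1.8$, and this is not an artifact of the Chernoff method: take $S$ a sum of many i.i.d.\ Bernoulli variables with $\Delta=1$ and $\mu=0.02$, and $\varepsilon=49$ so that $\varepsilon\mu=0.98$; then $\mathbb{P}\bigl[|\mu-S|>\varepsilon\mu\bigr]\ge\mathbb{P}[S\ge 2]\approx\mu^2/2=2\times 10^{-4}$, while $2\exp(-\varepsilon^2\mu/3)\approx 2\times 10^{-7}$. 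For large $\varepsilon$ the true upper tail decays like $\exp\bigl(-c\,\mu\,\varepsilon\ln\varepsilon\bigr)$, not $\exp(-c\,\mu\,\varepsilon^2)$, which is why standard references state this form only for $0<\varepsilon\le 1$. So the correct conclusion of your argument is the proposition restricted to $0<\varepsilon\le 1$; you should state that restriction explicitly rather than reproduce the ``for any $\varepsilon>0$'' of the statement. This restricted version is all the paper actually needs, since its application takes $\varepsilon=n^{-1/3}\to 0$, so nothing downstream (in particular line \ref{cheeger.chernoff} of the proof of Theorem \ref{newcheeger}) is affected.
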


We will make use of ``Big O'' and  ``little-o'' asymptotic notation. We say $g(n) = O(f(n))$ if $\lim \sup_{n \to \infty} \frac{g(n)}{f(n)}$ is bounded, and $g(n) = o(f(n))$ means $\lim_{n \to \infty} \frac{g(n)}{f(n)} = 0$.  For our purposes, we emphasize that for asymptotics all other parameters besides $n$, the number of vertices, are fixed.

\section{Proof of the Linear Cheeger Inequality}\label{cheeger}

In this section, we prove Theorem \ref{newcheeger}. Our main strategy will be to use a harmonic eigenvector corresponding to $\lambda_1$, the second smallest eigenvalue of $\mathcal{L}$, in order to choose effective probabilities for each vertex. Then using Lemma \ref{quadform1}, we calculate the corresponding Cheeger ratio.

\begin{proof}[Proof of Theorem \ref{newcheeger}]

 The lower bound follows from the classical Cheeger Inequality; hence, we will focus on the upper bound.

Let $\bx$ be a random vector where $\bx_i \sim \text{Bernoulli} \left(\frac{1-2\delta}{2}+\frac{\bv_i}{2\|\bv\|_\infty}\right)$ where each entry is independent of the others; $\delta$ is to be chosen later. Let $S$ be the support of $\bx$, and let $\mu = \mathbb{E}[ \vol S]$. Observe that since $\bv$ is orthogonal to $\mathbf{1}$, we have 
$\mu = \frac{\vol G (1-2\delta)}{2}$.  
Let $A$ be the event: $(1-\varepsilon) \mu < \vol S < (1+\varepsilon) \mu$, and let $\bar A$ denote the complement event with $\varepsilon$ to be chosen later.

\begin{eqnarray}
 \mathbb{E} \left[E(S,S)\right]  &=&  \mathbb{E} \left[ E(S,S) \mathds{1}_{A} +  E(S,S)  \mathds{1}_{\bar A} \right] \nonumber \\
  &\le&  \mathbb{E} \left[ E(S,S) \mathds{1}_{A} \right]+  \vol G \left(2\exp \left(\frac{-\varepsilon^2 \mu }{3 \Delta}\right) \right) \label{cheeger.chernoff} \\
    &=&  \mathbb{E} \left[ \frac{E(S, \bar S) + E(S,S) - E(S, \bar S)}{\vol S} (\vol S) \mathds{1}_{A} \right]+  \vol G \left(2\exp \left(\frac{-\varepsilon^2 \mu}{3 \Delta}\right) \right) \nonumber \\
        &=&  \mathbb{E} \left[ \frac{\vol S- E(S, \bar S)}{\vol S}(\vol S) \mathds{1}_{A} \right]+  \vol G \left(2\exp \left(\frac{-\varepsilon^2 \mu }{3 \Delta}\right) \right) 
        \nonumber \\
                &\le&  \mathbb{E} \left[ (1-h_G)( \vol S )\mathds{1}_{A} \right]+  \vol G \left(2\exp \left(\frac{-\varepsilon^2 \mu }{3 \Delta}\right) \right) \label{cheeger.def}\\
             &\le& (1-h_G) (1+\varepsilon) \mu+  \vol G \left(2\exp \left(\frac{-\varepsilon^2 \mu }{3 \Delta}\right) \right) \label{cheeger.mu}
\end{eqnarray}
where line \ref{cheeger.chernoff} follows from the Chernoff Bound, line \ref{cheeger.def} follows from the definition of $h_G$, and line \ref{cheeger.mu} follows from the upper bound on $\mu$ under the event $A$.

Altogether, we have,
\[ E(S,S) \le (1-h_G)  (1+\varepsilon) \mu +  (\vol G) \left(2\exp \left(\frac{-\varepsilon^2 \mu }{3 \Delta}\right) \right). \]

Recall that by Lemma \ref{quadform1},
\begin{eqnarray*}
 &&\mathbb{E} \left[ (\bx \bD^{1/2} )^* (\bI - \mathcal{L}) (\bx \bD^{1/2} ) \right ] \\ &=& \label{newform} \left(\frac{1-2\delta}{2} \mathbf{1}+\frac{\bv}{2\|\bv\|_\infty}\right)^* \bD^{1/2} (\bI - \mathcal{L}) \bD^{1/2} \left(\frac{1-2\delta}{2} \mathbf{1}+\frac{\bv}{2\|\bv\|_\infty}\right)\\
&=&
\frac{(1-2\delta)^2 (\vol G)}{4} + \frac{1-\lambda_{1}}{4 \|\bv\|^2_\infty}.  \end{eqnarray*} 
Further, since $(\bD^{1/2} \bx)^*  (\bI - \mathcal{L}) (\bD^{1/2} \bx) = E(S,S)$, we have:

\[\frac{(1-2\delta)^2 \vol(G)}{4} + \frac{1-\lambda_{1}}{4\|\bv\|^2_\infty} \le  (1-h_G)  (1+\varepsilon) \mu +  (\vol G) \left(2\exp \left(\frac{-\varepsilon^2 \mu }{3 \Delta}\right) \right).\]

In choosing $\delta = \varepsilon = n^{-1/3}$, we have:

\begin{eqnarray}
\frac{(1 - 2n^{-1/3})^2 (\vol G)}{4} + \frac{1-\lambda_{1}}{4\|\bv\|^2_\infty} &\le& (1-h_G) (1+n^{-1/3})  \frac{1-2n^{-1/3}}{2}  (\vol G) \nonumber \\
&&+  2 (\vol G) \exp \left(\frac{-\varepsilon^2 \mu }{3 \Delta}\right) \nonumber \\
\frac{1}{4} (1-o(1)) +  \frac{1-\lambda_{1}}{4(\vol G) \|\bv\|^2_\infty}  &\le& (1-h_G) \frac{1}{2} (1+o(1)) +  o(1) \label{eqn.usecheeger}  \\
\frac{1}{2} (1-o(1)) +  \frac{1-\lambda_{1}}{2(\vol G) \|\bv\|^2_\infty} &\le& 1-h_G \nonumber \\
 h_G  &\le& \left( \frac{1}{2} -  \frac{1-\lambda_{1}}{2(\vol G)\|\bv\|^2_\infty} \right)(1+o(1)).\nonumber
\end{eqnarray}
Above, line \ref{eqn.usecheeger} follows from the hypothesis that $\Delta / \vol{G} = o(n^{-2/3})$.

\end{proof}

In the case of regular graphs, Theorem \ref{newcheeger} becomes:

\begin{cor}[Cheeger Inequality with eigenvector norms for regular graphs]\label{newcheeger2}
Let $G$ be an undirected $d$-regular graph. Let $\lambda_{2}$ be the second maximum eigenvalue of the adjacency matrix of $G$ with unit eigenvector $\bv$. Then, the Cheeger constant $h_G$ obeys:
\[ \frac{1}{2} - \frac{\lambda_{2}}{2d} \le h_G \le \left(\frac{1}{2}-\frac{\lambda_{2}}{2 d n \|\bv\|_\infty^2}\right) (1+o(1)) \]
\end{cor}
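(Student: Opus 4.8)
The plan is to obtain Corollary \ref{newcheeger2} as a direct specialization of Theorem \ref{newcheeger}, the only substantive work being to translate between the two eigenvector normalizations and to check that the degree hypothesis of the theorem is met automatically in the regular case. First I would record the basic identities for a $d$-regular graph: here $\bD = d\bI$, so $\bD^{1/2} = \sqrt{d}\,\bI$ and $\vol G = dn$, while the normalized Laplacian simplifies to $\mathcal{L} = \bI - \tfrac{1}{d}\bA$. Consequently $\bD^{1/2}\by$ is an eigenvector of $\mathcal{L}$ exactly when $\by$ is an eigenvector of $\bA$, and the eigenvalues are related by $\lambda_1 = 1 - \lambda_2/d$, equivalently $1 - \lambda_1 = \lambda_2/d$. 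I would also note that $\Delta/\vol G = d/(dn) = 1/n$, and since $(1/n)/n^{-2/3} = n^{-1/3} \to 0$, the hypothesis $\Delta/\vol G = o(n^{-2/3})$ holds for every regular graph with no additional assumption; this is precisely what lets the corollary be stated unconditionally.

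The key step is reconciling the normalizations. Theorem \ref{newcheeger} uses a harmonic eigenvector $\by$ with $\|\bD^{1/2}\by\|_2 = 1$, whereas the corollary uses a unit adjacency eigenvector $\bv$ with $\|\bv\|_2 = 1$. Since $\|\bD^{1/2}\by\|_2 = \sqrt{d}\,\|\by\|_2$, the harmonic eigenvector is $\by = \bv/\sqrt{d}$ up to sign, and hence $\|\by\|_\infty^2 = \|\bv\|_\infty^2/d$. I would then substitute these identities into both bounds of Theorem \ref{newcheeger}. The lower bound becomes $\tfrac{1}{2} - \tfrac{1-\lambda_1}{2} = \tfrac{1}{2} - \tfrac{\lambda_2}{2d}$ immediately. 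For the upper bound, the denominator transforms as $\|\by\|_\infty^2\,\vol G = (\|\bv\|_\infty^2/d)(dn) = n\,\|\bv\|_\infty^2$, while the numerator satisfies $1-\lambda_1 = \lambda_2/d$, so the fraction $\tfrac{1-\lambda_1}{2\,\|\by\|_\infty^2\,\vol G}$ collapses to $\tfrac{\lambda_2}{2dn\,\|\bv\|_\infty^2}$, yielding exactly the claimed bound, with the $(1+o(1))$ factor carried over unchanged from the theorem.

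The main (and essentially only) obstacle here is bookkeeping: one must be scrupulous about which eigenvector convention each norm refers to, since a misplaced factor of $d$ or $\sqrt{d}$ would corrupt both the degree-ratio verification and the final fraction. Everything else is routine substitution into the already-established inequality of Theorem \ref{newcheeger}.
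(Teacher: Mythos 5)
Your proposal is correct and follows exactly the route the paper intends: the paper omits the proof, stating only that it follows from Theorem \ref{newcheeger} together with $\bD = d\bI$ and $\mathcal{L} = \bI - \frac{\bA}{d}$, and your substitutions ($1-\lambda_1 = \lambda_2/d$, $\|\by\|_\infty^2 \vol G = n\|\bv\|_\infty^2$ after rescaling the unit eigenvector by $1/\sqrt{d}$, and the automatic verification $\Delta/\vol G = 1/n = o(n^{-2/3})$) are precisely the omitted bookkeeping. No gaps.
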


The proof is omitted as it follows from Theorem \ref{newcheeger} and the facts for a $d$-regular graph: $\bD = d \bI$ and $\mathcal{L} = \bI - \frac{\bA}{d}$. \hfill $\square$

\section{Linear Cheeger Inequality with an Arbitrary Vector}\label{sec.highereig}

One of the great aspects of the proof of Theorem \ref{newcheeger} is that the choice of the vector $\bv$ is not restricted to the eigenvector corresponding to $\lambda_1$. In fact, any vector orthogonal to the principle harmonic eigenspace (e.g., $\bv^* \mathbf{D} \mathbf{1} = 0$) can be chosen; however this comes at the cost of a different term in the numerator in the lower bound depending upon the eigenspace decomposition of $\bD^{1/2} \bv$. In particular, we prove the following:

\begin{theorem}[Cheeger Inequality with vector norms of an arbitrary vector] \label{fullnewcheeger}
Let $G$ be an undirected graph with maximum degree $\Delta$ and $\frac{\Delta}{\vol G} = o(n^{-2/3})$. Let $\mathcal{L}$ be the normalized Laplacian matrix of $G$ with eigenvalues $0= \lambda_{min} \le \lambda_{1} \le \lambda_{2} \le \ldots \le \lambda_{k} \le \ldots$ with corresponding harmonic eigenvectors $\bv_0, \bv_1, \bv_2, \bv_3, \ldots, \bv_k, \ldots$. Let $\bv$ be a vector which is a linear combination of $\bv_1, \bv_2, \ldots, \bv_k$ for any $k>0$ with $\| \bD^{1/2} \bv \|_2 =1.$ and $\|\bv\|_\infty \le \frac{1}{2}$. Then, the Cheeger constant, $h_G$, obeys:

\[ \frac{1}{2} - \frac{1-\lambda_{1}}{2} \le h_G \le \frac{1}{2} - \frac{1-\lambda_{k}}{2 \|\bv\|_\infty^2\emph{\vol G}} (1+o(1)). \]
\end{theorem}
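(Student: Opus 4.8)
The plan is to reuse, almost verbatim, the randomized ``sweep'' from the proof of Theorem \ref{newcheeger}, changing only the single step where the spectral structure of $\bv$ enters. First I would draw the random vector $\bx$ with independent coordinates $\bx_i \sim \text{Bernoulli}\left(\frac{1-2\delta}{2} + \frac{\bv_i}{2\|\bv\|_\infty}\right)$ and let $S$ be its support; this is where the hypothesis $\|\bv\|_\infty \le \frac12$ is used, to keep each Bernoulli parameter a legitimate probability. Since each $\bv_j$ with $j \ge 1$ is $\bD$-orthogonal to the trivial harmonic eigenvector $\bv_0$, their combination $\bv$ satisfies $\bv^*\bD\mathbf{1}=0$, so exactly as before $\mu := \mathbb{E}[\vol S] = \frac{(1-2\delta)\vol G}{2}$. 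The Chernoff concentration of $\vol S$ about $\mu$, the bound $E(S,S) \le (1-h_G)\vol S$ coming from the definition of $h_G$ on the event $A$, and the choice $\delta = \varepsilon = n^{-1/3}$ together with the hypothesis $\Delta/\vol G = o(n^{-2/3})$ all transfer without change, so that the entire outer chain ending in an upper bound for $\mathbb{E}\big[(\bD^{1/2}\bx)^*(\bI-\mathcal{L})(\bD^{1/2}\bx)\big]$ is identical to the one in Theorem \ref{newcheeger}.

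The one genuinely new ingredient is the evaluation of that same quadratic form via Lemma \ref{quadform1} (applicable because $\bD^{1/2}(\bI-\mathcal{L})\bD^{1/2} = \bA$ has zero diagonal). Writing $\boldsymbol{\mu} = \frac{1-2\delta}{2}\mathbf{1} + \frac{\bv}{2\|\bv\|_\infty}$, the cross terms vanish by $\bv^*\bD\mathbf{1}=0$, leaving
\[ \boldsymbol{\mu}^*\bD^{1/2}(\bI-\mathcal{L})\bD^{1/2}\boldsymbol{\mu} = \frac{(1-2\delta)^2\vol G}{4} + \frac{1}{4\|\bv\|_\infty^2}(\bD^{1/2}\bv)^*(\bI-\mathcal{L})(\bD^{1/2}\bv). \]
Here is the only place the argument departs from Theorem \ref{newcheeger}: instead of $\bD^{1/2}\bv$ being a single $\mathcal{L}$-eigenvector with eigenvalue $\lambda_1$, I would expand $\bv = \sum_{j=1}^k c_j \bv_j$ in the harmonic eigenbasis, which we may take $\bD^{1/2}$-orthonormal, so that $\bD^{1/2}\bv = \sum_j c_j(\bD^{1/2}\bv_j)$ is a combination of orthonormal $\mathcal{L}$-eigenvectors. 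This gives $(\bD^{1/2}\bv)^*(\bI-\mathcal{L})(\bD^{1/2}\bv) = \sum_{j=1}^k c_j^2(1-\lambda_j)$, a convex combination of the quantities $1-\lambda_j$ with weights $c_j^2$ summing to $\|\bD^{1/2}\bv\|_2^2 = 1$.

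The crux is getting the inequality direction right: the outer chain needs a \emph{lower} bound on the quadratic form, so I must lower bound $\sum_j c_j^2(1-\lambda_j)$. Since $\lambda_1 \le \cdots \le \lambda_k$, the smallest coefficient is $1-\lambda_k$, whence $\sum_j c_j^2(1-\lambda_j) \ge (1-\lambda_k)\sum_j c_j^2 = 1-\lambda_k$. This worst-case-coefficient estimate is exactly the source of the degradation from $\lambda_1$ to $\lambda_k$, and it is the one new (though very short) step. Substituting $1-\lambda_k$ for $1-\lambda_1$ and running the identical algebra as in Theorem \ref{newcheeger} delivers the upper bound $h_G \le \left(\frac12 - \frac{1-\lambda_k}{2\|\bv\|_\infty^2\vol G}\right)(1+o(1))$, while the lower bound is again just the classical Cheeger inequality $\frac{\lambda_1}{2} = \frac12 - \frac{1-\lambda_1}{2} \le h_G$. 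I expect no serious obstacle beyond bookkeeping; the only subtlety to verify is that $\bv^*\bD\mathbf{1}=0$ and the $\bD^{1/2}$-orthonormality of the $\bv_j$ genuinely survive when $\bv$ is a mixture rather than a pure eigenvector, so that both the mean computation for $\mu$ and the diagonalization of the quadratic form go through unchanged.
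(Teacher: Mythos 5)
Your proposal is correct and follows essentially the same route as the paper: reuse the randomized sweep of Theorem \ref{newcheeger}, use $\|\bv\|_\infty \le 1/2$ to keep the Bernoulli parameters legitimate, and expand $\bD^{1/2}\bv$ in the orthonormal harmonic eigenbasis so the quadratic form contributes $\sum_j c_j^2(1-\lambda_j) \ge 1-\lambda_k$. You even get the key inequality in the correct direction (a \emph{lower} bound on the quadratic form), whereas the paper's displayed chain writes ``$\le$'' at that step, which is a typo given how the bound is subsequently used.
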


\begin{proof}
We follow the proof of the preceding theorem.
Since $\| \bv\|_\infty \le 1/2$, $\mathbf{1} \frac{1}{2} + \bv$ still provides a probability distribution for each vertex. Let $\bD^{1/2} \bv = \sum_{i=1}^k \alpha_i^2 \bD^{1/2} \bv_i$ be the the eigenvalue decomposition of $\bD^{1/2} \bv$. Note that since $\|\bD^{1/2} \bv\|_2=1$, $\sum_{i=1}^k \alpha_i^2 = 1.$ We follow the proof of as Theorem \ref{newcheeger} until line (\ref{newform}). At which point, we have:
\begin{eqnarray*} 
&&\mathbb{E} \left[ (\bx \bD^{1/2} )^*(\bI - \mathcal{L}) (\bx \bD^{1/2} ) \right ] \\   &=&\left(\frac{1-2\delta}{2} \mathbf{1}+\frac{\bv}{2\|\bv\|_\infty}\right)^* \bD^{1/2} (\bI - \mathcal{L}) \bD^{1/2} \left(\frac{1-2\delta}{2}\mathbf{1}+\frac{\bv}{2\|\bv\|_\infty}\right)\\ 
&=&
\frac{(1-2\delta)^2 (\vol G)}{4} + \frac{1-\sum_{i=2}^k \alpha_i^2 \lambda_i}{4 \|\bv\|^2_\infty} \\  
&\le& \frac{(1-2\delta)^2 (\vol G)}{4} + \frac{1-\lambda_k}{4 \|\bv\|^2_\infty}  \end{eqnarray*}

From there, the proof follows normally where $\lambda_{1}$ from Theorem \ref{newcheeger} is replaced with $\lambda_{k}$.
\end{proof}

\section{A Preliminary Test}

We perform a preliminary test to support this concept for an alternative algorithm. We reference the {\it Wolfram} database for a library  of graphs, and perform the randomized sweep algorithm as in the proof to Theorem \ref{newcheeger} on all 5529 graphs  within that have between 10 and 1500 vertices. Specifically, for each graph, to find a cut set, $S$, we choose to include the vertex $i$ with a probability $1/2+ \frac{\bv_i}{\|\bv\|_\infty^2}$ (independent of other vertices). We find $n-1$ random cut sets and compare the best one to the best of the $n-1$ cuts using the classical deterministic algorithm. The random sweep performed just as well, or better, as the classical sweep for more than 34\% of the graphs tested, and showed improvement, in some cases significant, for 3.9\% of the graphs.  
	
Additionally, we implement the random algorithm with $n^2$ random sweeps on all such graphs between 10 and 50 vertices, and the results improve tremendously. In which case, more than 51\% of the tested graphs had a partition just as good, if not better, than the classical sweep, and more than 19\% showed an improvement.
	
	The results of this test are summarized in Figures \ref{cheegerplot} and \ref{cheegerplot2}.

\begin{figure}[H]
\centering
\includegraphics{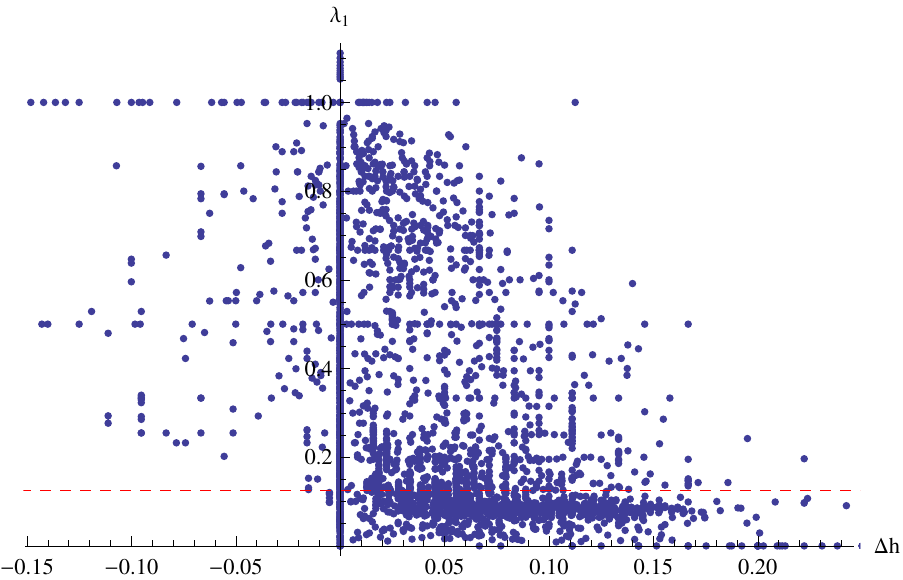}
\caption{A plot of $\Delta h$, the Cheeger ratio when using the random Cheeger sweep (using the best of $n-1$ random sweeps) minus that of classical Cheeger sweep, against $\lambda_1$ for over 5500 named graphs. The red dashed line is the line $\lambda_1 = 1/8$.}
\label{cheegerplot}
\end{figure}

\begin{figure}[H]
\centering
\includegraphics{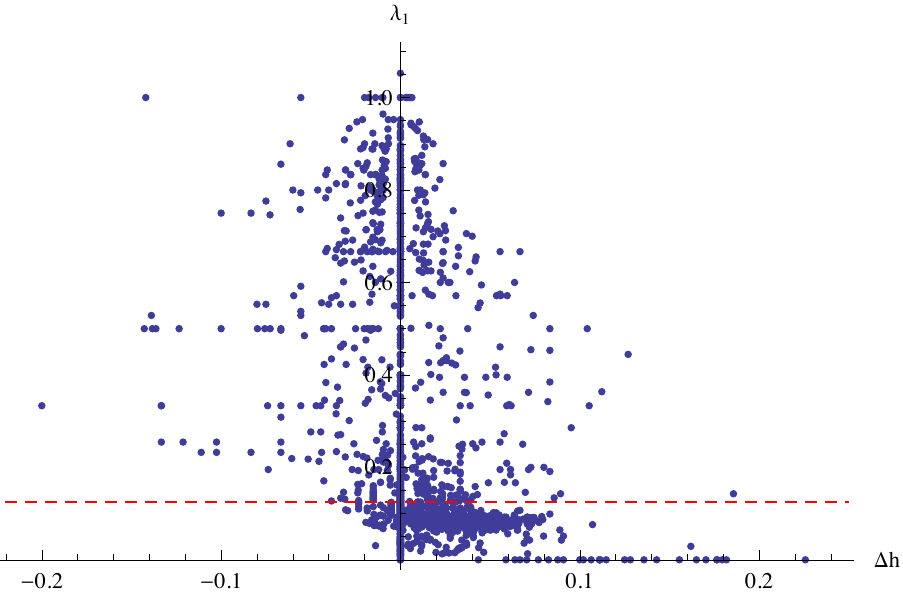}
\caption{A plot of $\Delta h$, the Cheeger ratio when using the random Cheeger sweep (using the best of $n^2$ sweeps) minus that of classical Cheeger sweep, against $\lambda_1$ for all named graphs with between 10 and 50 vertices. The red dashed line is the line $\lambda_1 = 1/8$.}
\label{cheegerplot2}
\end{figure}

\section{Conclusions and Future Work} \label{conc}

We have given an alternative Cheeger Inequality using eigenvector norms. The proof of the result suggests an alternative randomized ``sweep'' algorithm when $\lambda_1 > 1/8$ (as that is when the upper bound of the classical Cheeger Inequality exceeds 1/2). In particular, when the spectral gap is sufficiently large, it may be better to take a randomized partition based on the harmonic eigenvector $\bv$ as opposed to taking deterministic partitions. Further, we demonstrate the validity of the concept of such of an algorithm, demonstrating that in many (but not all) cases, the random sweep outperforms the original sweep.

Finally, the notion that ``eigenvector norms matter'' in spectral graph theory is certainly interesting and evokes the question: For which other graph-theoretic parameters can spectral bounds be improved by considering eigenvector norms in addition to the eigenvalues?

%
This research was partially supported by NSF CMMI-1300477 and CMMI-1404864.


\end{document}